\theoremstyle{plain}
\newtheorem{theorem}{Theorem}[section]
\newtheorem{proposition}[theorem]{Proposition}
\newtheorem{lemma}[theorem]{Lemma}
\theoremstyle{remark}
\newtheorem{remark}[theorem]{Remark}
\newcommand{\eps}{\varepsilon} 
\numberwithin{equation}{section}
\newcommand{\C}{\mathbb{C}}
\newcommand{\R}{\mathbb{R}}
\renewcommand{\Im}{\operatorname{Im}}
\renewcommand{\Re}{\operatorname{Re}}
\newcommand{\qtq}[1]{\quad\text{#1}\quad}
\begin{document}

\title[2d focusing NLS]{Scattering below the ground state for the 2$d$ radial nonlinear Schr\"odinger equation}

\author[A. Arora]{Anudeep Kumar Arora}
\address{Department of Mathematics \& Statistics, Florida International University, Miami, FL, USA}
\email{ana001@fiu.edu}

\author[B. Dodson]{Benjamin Dodson}
\address{Department of Mathematics, Johns Hopkins University, Baltimore, MD, USA}
\email{bdodson4@jhu.edu}

\author[J. Murphy]{Jason Murphy}
\address{Department of Mathematics \& Statistics, Missouri University of Science \& Technology, Rolla, MO, USA}
\email{jason.murphy@mst.edu}

\begin{abstract} We revisit the problem of scattering below the ground state threshold for the mass-supercritical focusing nonlinear Schr\"odinger equation in two space dimensions.  We present a simple new proof that treats the case of radial initial data.  The key ingredient is a localized virial/Morawetz estimate; the radial assumption aids in controlling the error terms resulting from the spatial localization. \end{abstract} 
\maketitle

\section{Introduction}

We consider the initial-value problem for the focusing nonlinear Schr\"odinger equation (NLS) in two space dimensions:
\begin{equation}\label{nls}
\begin{cases} (i\partial_t+\Delta) u = -|u|^p u, \\ u(0)=u_0\in H^1(\R^2),\end{cases}
\end{equation}
where $u:\R\times\R^2\to\C$ and $2<p<\infty$.  This equation admits a global nonscattering solution of the form $u(t)=e^{it}Q(x)$, where $Q$ is the \emph{ground state} solution to the elliptic equation
\[
-\Delta Q + Q - Q^{p+1} = 0. 
\]
In this note we will give a simple new proof of scattering for radial solutions to \eqref{nls} with initial data `below the ground state threshold'.  Before stating the result precisely, let us introduce a few basic notions.

First, we recall that solutions to \eqref{nls} conserve their \emph{mass} and \emph{energy}, defined by
\begin{align*}
M(u(t)) & = \int_{\R^2} |u(t,x)|^2\,dx, \\
E(u(t)) & = \int_{\R^2} \tfrac12|\nabla u(t,x)|^2 - \tfrac{1}{p+2}|u(t,x)|^{p+2}\,dx,
\end{align*}
respectively. 

Next, we observe that the class of solutions to \eqref{nls} is invariant under the scaling
\begin{equation}\label{scaling}
u(t,x)\mapsto \lambda^{\frac{2}{p}}u(\lambda^2 t,\lambda x). 
\end{equation}
This defines a notion of \emph{criticality} for \eqref{nls}.  Specifically, if we define 
\[
s_c=1-\tfrac{2}{p},
\]
then we find that the $\dot H^{s_c}$ norm of initial data is invariant under the rescaling \eqref{scaling}. In particular (since we are working in two space dimensions) we always have $s_c<1$, which means the equation is always \emph{energy-subcritical}.  The case $s_c=0$ (i.e. $p=2$) is called the \emph{mass-critical} equation, since in this case the mass of solutions is left invariant under \eqref{scaling}.  

In this paper, we consider the mass-supercritical range $2<p<\infty$. We will prove the following.

\begin{theorem}\label{T} Let $2<p<\infty$.  Suppose $u_0\in H^1(\R^2)$ is spherically-symmetric and satisfies
\begin{equation}\label{sub1}
M(u_0)^{2}E(u_0)^{p-2}<M(Q)^{2}E(Q)^{p-2}
\end{equation}
and
\begin{equation}\label{sub2}
\|u_0\|_{L^2}^{2}\|\nabla u_0\|_{L^2}^{p-2} < \|Q\|_{L^2}^{2}\|\nabla Q\|_{L^2}^{p-2}.
\end{equation}
Then \eqref{nls} admits a unique, global-in-time solution $u$ with $u|_{t=0}=u_0$.  Furthermore, the solution $u$ \emph{scatters}, that is, there exist $u_\pm\in H^1(\R^2)$ such that
\[
\lim_{t\to\pm\infty}\|u(t)-e^{it\Delta}u_\pm\|_{H^1}=0. 
\]
\end{theorem}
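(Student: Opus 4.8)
The plan is to follow the now-standard concentration-compactness/rigidity roadmap of Kenig–Merle, streamlined by the radial reduction and by replacing the delicate rigidity argument with a direct localized virial/Morawetz estimate. I would organize the proof into three stages.

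First, the variational stage. Using the sharp Gagliardo–Nirenberg inequality (whose optimizers are, up to symmetries, the ground state $Q$) together with the Pohozaev identities for $Q$, I would show that the two subthreshold conditions \eqref{sub1}–\eqref{sub2} are propagated by the flow: by conservation of mass and energy and a continuity argument, any $H^1$ solution with data satisfying \eqref{sub1}–\eqref{sub2} obeys $\|u(t)\|_{L^2}^2\|\nabla u(t)\|_{L^2}^{p-2}\le (1-\delta)\|Q\|_{L^2}^2\|\nabla Q\|_{L^2}^{p-2}$ for some $\delta>0$, uniformly in $t$. In particular $\|\nabla u(t)\|_{L^2}$ stays bounded, so the $H^1$-subcritical local theory gives a global solution; the same coercivity also yields a uniform lower bound on the virial quantity $\int |\nabla u|^2 - \tfrac{p}{p+2}\int|u|^{p+2}\gtrsim \|\nabla u(t)\|_{L^2}^2$ (or equivalently controls the energy from below), which will be the engine of the Morawetz estimate.

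Second, the scattering criterion and concentration-compactness. Scattering is equivalent to a global spacetime bound such as $\|u\|_{L_t^q L_x^r}<\infty$ for an admissible Strichartz pair adapted to $|u|^p u$ in $\R^2$; I would invoke the standard small-data/stability theory to reduce matters to this. Assuming failure of scattering, a profile decomposition for bounded $H^1$ sequences relative to the Schrödinger group (in the radial class, so that only the translation-free, scaling-compatible profiles survive) together with the stability theory produces a minimal-mass-energy critical element: a global solution $u_c$, nonscattering in both time directions, whose orbit $\{u_c(t)\}$ is precompact in $H^1$. This critical element still satisfies the subthreshold conditions, hence enjoys the uniform coercivity from the first stage.

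Third, the rigidity step via localized virial/Morawetz — this is where the radial hypothesis does its work and where I expect the main difficulty. I would introduce a radial cutoff weight $a_R(x)=R^2a(x/R)$ with $a$ behaving like $|x|^2$ near the origin and growing linearly at infinity, and study $z_R(t)=\int a_R |u(t)|^2\,dx$. Differentiating twice, $\tfrac{d^2}{dt^2}z_R$ equals the usual virial quantity (which is $\gtrsim \|\nabla u(t)\|_{L^2}^2 \gtrsim 1$ by coercivity, since $u_c\not\equiv 0$) plus error terms supported in $|x|\gtrsim R$; these errors involve $\int_{|x|\gtrsim R}|\nabla u|^2$, $\int_{|x|\gtrsim R}|u|^2$, and the nonlinear term $\int_{|x|\gtrsim R}|u|^{p+2}$. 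The compactness of the orbit makes the mass and gradient tails small uniformly in $t$ for $R$ large; the radial Strauss/Sobolev inequality $\||x|^{1/2}u\|_{L^\infty_x}\lesssim \|u\|_{L^2}^{1/2}\|\nabla u\|_{L^2}^{1/2}$ controls the critical nonlinear tail $\int_{|x|\gtrsim R}|u|^{p+2}\lesssim R^{-p/2}\|u\|_{L^2}^{(p+4)/2}\|\nabla u\|_{L^2}^{p/2}$, which is acceptable precisely because we are in two dimensions with radial data. Choosing $R$ large enough that all errors are bounded by half the coercive lower bound, we get $\tfrac{d^2}{dt^2}z_R(t)\gtrsim 1$ for all $t\in\R$, while $|\tfrac{d}{dt}z_R(t)|\lesssim \|x a_R^{1/2}\|$-type bounds give $|\tfrac{d}{dt}z_R(t)|\lesssim R$ uniformly; integrating the second-derivative lower bound over a long time interval contradicts the uniform bound on the first derivative. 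Hence no critical element exists, and scattering holds. The technical heart is the careful bookkeeping of the localization errors — ensuring the radial gain genuinely beats the growth of $a_R$ and that the coercivity constant survives — so I would allocate most of the effort there; everything else is by-now routine machinery.
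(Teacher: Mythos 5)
Your outline is essentially correct as mathematics, but it is the concentration-compactness/rigidity scheme by which this theorem was already proved (even for non-radial data) in \cite{AkahoriNawa, CFX, Gue}, following \cite{HR, DHR} --- and it is precisely the route this paper is written to avoid. The variational stage matches the paper's Lemmas~\ref{L:Coercive1} and \ref{L:Coercive2}. After that, however, the paper never constructs a critical element. Instead it proves a localized virial/Morawetz estimate \emph{directly for the given solution}: $\int_0^T\int|u|^{p+2}\,dx\,dt\lesssim_{E_0}T^{\alpha}$ with $\alpha=\max\{\tfrac13,\tfrac{2}{p+2}\}<\tfrac12$, where the truncation errors at radius $R$ are controlled not by precompactness of an orbit but by the radial Sobolev bound $\||x|^{1/2}u\|_{L^\infty}\lesssim\|u\|_{H^1}$ (so that $\int_{|x|>R}|u|^{p+2}\lesssim R^{-p/2}$ uniformly in time), and one then optimizes $R\sim T^{1/(1+\sigma)}$. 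This sub-linear spacetime bound is converted into scattering by a direct Duhamel argument: on each interval where $e^{it\Delta}u_0$ is small in $L^{2p}_{t,x}$, one locates a window $[t_0,t_1]$ of length $\delta T^{1-\alpha}$ on which the potential energy has small integral, and splits $\int_0^{t_1}e^{i(t-s)\Delta}(|u|^pu)\,ds$ into the contribution of $[0,t_0]$ (decaying like a negative power of $T$ via the dispersive estimate, H\"older, and interpolation with a Strichartz bound --- this is where $\alpha<\tfrac12$ is used) and of $[t_0,t_1]$ (small by Strichartz and the fractional chain rule), closing a continuity argument. Your approach buys generality (it is the scheme that handles non-radial data); the paper's buys brevity and self-containedness, with no profile decomposition, no stability theory, and no minimal counterexample.

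If you do pursue your route, be aware that the ``routine machinery'' you defer --- the linear and nonlinear profile decompositions for bounded $H^1$ sequences in the intercritical setting, including the treatment of profiles with shrinking scales, which must be matched to the $\dot H^{s_c}$-critical problem or excluded by the sub-threshold hypotheses --- is where most of the length of the earlier proofs lives, so your sketch defers the bulk of the work. Your rigidity step itself is sound, with one small caveat: the uniform lower bound on the virial quantity for the critical element (equivalently, $\|\nabla u_c(t)\|_{L^2}\gtrsim 1$ uniformly in $t$) is not automatic and needs a short separate argument, e.g.\ that small kinetic energy at one time forces small energy and hence scattering via the coercivity and small-data theory.
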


\begin{remark} The powers of $M(u_0)$ and $E(u_0)$ are chosen so that the product scales like the critical $\dot H^{s_c}$-norm.  Here $e^{it\Delta}$ denotes the Schr\"odinger group, so that $e^{it\Delta}u_{\pm}$ are solutions to the linear Schr\"odinger equation.  The proof of Theorem~\ref{T} will also show that the solution $u$ obeys global space-time bounds of the form
\[
\|u\|_{L_{t,x}^{2p}(\R\times\R^2)} \leq C(M(u_0),E(u_0)). 
\]
\end{remark}

Theorem~\ref{T} result was previously established in \cite{AkahoriNawa, CFX, Gue}, who extended the arguments of \cite{HR, DHR}.  In fact, in \cite{AkahoriNawa, CFX, Gue} the same result is proven without the restriction to radial initial data.  In these works the authors proceed via the concentration-compactness approach to induction on energy.  The purpose of this note is to demonstrate a short and simple argument that suffices to handle the radial case; in particular, it avoids concentration-compactness entirely.  This extends our previous works \cite{DM1, DM2} to the two-dimensional setting, which often presents new challenges due to issues with Morawetz estimates and weaker dispersive estimates.  It is an interesting problem to find a simplified argument to handle the general (i.e. non-radial) case in two dimensions, as well as to consider the one-dimensional problem. 

The strategy of proof will be to establish a virial/Morawetz estimate for solutions to \eqref{nls} from which we may deduce scattering.  The requisite coercivity in the virial/Morawetz estimate follows from the sub-threshold assumptions \eqref{sub1} and \eqref{sub2} (specifically through the use of the sharp Gagliardo--Nirenberg inequality).  The radial assumption is used to get uniform control over error terms stemming from the spatial truncation, which is in turn needed to render the virial/Morawetz quantities finite.  In particular, we utilize the radial Sobolev embedding estimate to deal with errors at large radii.

\section{Preliminaries}
We use the standard notation $A\lesssim B$ to denote $A\leq CB$ for some $C>0$, with dependence on parameters indicated via subscripts.  We also use the `big O' notation $\mathcal{O}$.  When necessary we will write $C(A)$ to denote a positive constant depending on a parameter $A$.

 We write $a\pm$ to denote $a\pm\eps$ for sufficiently small $\eps>0$.  We employ the standard Lebesgue and Sobolev spaces, including mixed space-time Lebesgue norms.  We write $r'$ for the H\"older dual of $r$, i.e. the solution to $\tfrac{1}{r}+\tfrac{1}{r'}=1$. 

We utilize the following radial Sobolev embedding estimate, which follows from the fundamental theorem of calculus and Cauchy--Schwarz. 
\begin{lemma}[Radial Sobolev embedding]\label{L:Sobolev}  If $f\in H^1(\R^2)$ is spherically-symmetric, then
\[
\| |x|^{\frac12}f\|_{L^\infty(\R^2)} \lesssim \|f\|_{H^1(\R^2)}. 
\]
\end{lemma}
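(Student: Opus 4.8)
The plan is to reduce everything to the one-dimensional fundamental theorem of calculus in the radial variable. By a standard density argument — smooth compactly supported functions are dense in $H^1(\R^2)$, and averaging over rotations preserves both radiality and $H^1$-convergence — it suffices to prove the estimate for radial $f\in C_c^\infty(\R^2)$, which I write as $f=f(r)$ with $r=|x|$. Since such an $f$ has compact support, $s|f(s)|^2\to 0$ as $s\to\infty$, and the fundamental theorem of calculus gives, for every $r>0$,
\[
r|f(r)|^2 = -\int_r^\infty r\,\frac{d}{ds}\bigl(|f(s)|^2\bigr)\,ds.
\]

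The key step is to pull the factor $r$ inside the integral and exploit $r\le s$ on the domain of integration to convert it into the natural two-dimensional weight. Since $\bigl|\tfrac{d}{ds}|f(s)|^2\bigr| = 2|\Re(\overline{f(s)}f'(s))|\le 2|f(s)||f'(s)|$, this yields
\[
r|f(r)|^2 \le 2\int_r^\infty s\,|f(s)||f'(s)|\,ds \le 2\int_0^\infty s\,|f(s)||f'(s)|\,ds,
\]
and then Cauchy--Schwarz gives
\[
r|f(r)|^2 \le 2\Bigl(\int_0^\infty s\,|f(s)|^2\,ds\Bigr)^{\!1/2}\Bigl(\int_0^\infty s\,|f'(s)|^2\,ds\Bigr)^{\!1/2}.
\]
For a radial function on $\R^2$ one has $\int_0^\infty s|f(s)|^2\,ds = \tfrac{1}{2\pi}\|f\|_{L^2(\R^2)}^2$ and, since $|\nabla f| = |f'(r)|$, $\int_0^\infty s|f'(s)|^2\,ds = \tfrac{1}{2\pi}\|\nabla f\|_{L^2(\R^2)}^2$. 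Hence $r|f(r)|^2 \lesssim \|f\|_{L^2}\|\nabla f\|_{L^2}\le \|f\|_{H^1}^2$ uniformly in $r$.

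Taking the supremum over $r>0$ and then a square root yields $\||x|^{1/2}f\|_{L^\infty(\R^2)}\lesssim \|f\|_{H^1(\R^2)}$, and the general case follows by passing to the limit along the approximating sequence. I do not anticipate a genuine obstacle here; the only point needing a word of care is the justification of the decay $s|f(s)|^2\to 0$ and of the fundamental-theorem-of-calculus step for general $H^1$ data, which is precisely what the density reduction handles (alternatively, one notes directly that $\int_0^\infty s|f|^2\,ds<\infty$ forces $\liminf_{s\to\infty} s|f(s)|^2 = 0$ while $\tfrac{d}{ds}\bigl(s|f(s)|^2\bigr)\in L^1(r,\infty)$). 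I would emphasize that the spherical symmetry is essential in two ways: it lets us identify $\int_0^\infty s|f'|^2\,ds$ with $\|\nabla f\|_{L^2}^2$, and the conclusion is simply false for general (non-radial) $f\in H^1(\R^2)$.
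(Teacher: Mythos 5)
Your proof is correct and is exactly the argument the paper has in mind: the authors state only that the lemma ``follows from the fundamental theorem of calculus and Cauchy--Schwarz,'' and your write-up (FTC in the radial variable, the bound $r\le s$ to produce the two-dimensional weight, then Cauchy--Schwarz and a density/limiting step) fills in precisely those details. No issues.
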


\subsection{Linear estimates; local theory}

We recall the standard dispersive estimate
\[
\|e^{it\Delta}\|_{L^{r'}(\R^2)\to L^{r}(\R^2)} \lesssim |t|^{-1+\frac{2}{r}},\quad 2 \leq r\leq\infty,
\]
which in turn yield the following Strichartz estimates \cite{GinVel, KeeTao, Str}: for $t\in I\subset\R$ and $2< q_j\leq\infty$ satisfying $\tfrac{1}{q_j}+\tfrac{1}{r_j}=\tfrac12$ for $j=1,2$, 

\begin{align*}
\|e^{it\Delta}f\|_{L_t^{q_1} L_x^{r_1}(I\times\R^2)} & \lesssim \|f\|_{L^2(\R^2)}, \\
\biggl\| \int_\R e^{-is\Delta} F(s)\,ds\biggr\|_{L^2(\R^2)} & \lesssim \|F\|_{L_t^{q_1'} L_x^{r_1'}(\R\times\R^2)},\\
\biggl\| \int_0^t e^{i(t-s)\Delta}F(s)\,ds\biggr\|_{L_t^{q_1} L_x^{r_1}(I\times\R^2)} & \lesssim \|F\|_{L_t^{q_2'} L_x^{r_2'}(I\times\R^2)}.
\end{align*}

The endpoint case $(q,r)=(2,\infty)$ may also be included in the radial setting \cite{Tao2d}, although we will not need it here.

Local well-posedness for \eqref{nls} follows from standard arguments using Strichartz estimates and Sobolev embedding.  In particular, any $u_0\in H^1$ leads to a local-in-time solution in $C_t H^1$, which may be extended to a global solution provided the $H^1$-norm remains uniformly bounded in time. See \cite{Cazenave} for a textbook treatment. 

\subsection{Variational analysis} 

We recall that $Q$ is the unique positive, radial, decaying solution to
\begin{equation}\label{elliptic}
-\Delta Q + Q - Q^{p+1} = 0,
\end{equation}
which may be constructed as an optimizer of the sharp Gagliardo--Nirenberg inequality
\[
\|f\|_{L^{p+2}(\R^2)}^{p+2} \leq C_0 \|f\|_{L^2(\R^2)}^2\|\nabla f\|_{L^2(\R^2)}^p
\]
(see e.g. \cite{Weinstein}).  Multiplying \eqref{elliptic} by $Q$ and by $x\cdot\nabla Q$ and integrating leads to the Pohozaev identities
\[
\|Q\|_{L^2}^2 = \tfrac{2}{p+2}\|Q||_{L^{p+2}}^{p+2}\qtq{and} \|\nabla Q\|_{L^2}^2 = \tfrac{p}{p+2}\|Q\|_{L^{p+2}}^{p+2}.  
\]
In particular,
\begin{equation}\label{Qeq1}
\|Q\|_{L^2}^{2}\|\nabla Q\|_{L^2}^{p-2} = \tfrac{p+2}{p}C_0^{-1}\end{equation}
and
\begin{equation}\label{Qeq2}
M(Q)^{2}E(Q)^{p-2} = (\tfrac{p-2}{2p})^{p-2}(\tfrac{p+2}{p})^2C_0^{-2}. 
\end{equation}

We will need the following two lemmas. 

\begin{lemma}[Coercivity I]\label{L:Coercive1} If 
\[
M(u_0)^{2}E(u_0)^{p-2}<(1-\delta) M(Q)^{2}E(Q)^{p-2}
\]
and
\[
\|u_0\|_{L^2}^{2}\|\nabla u_0\|_{L^2}^{p-2} \leq \|Q\|_{L^2}^{2}\|\nabla Q\|_{L^2}^{p-2},
\]
then there exists $\delta'>0$ so that
\begin{equation}\label{coer}
\|u(t)\|_{L^2}^{2}\|\nabla u(t)\|_{L^2}^{p-2} < (1-\delta')\|Q\|_{L^2}^{2}\|\nabla Q\|_{L^2}^{p-2}
\end{equation}
for all $t\in I$, where $u:I\times\R^2\to\C$ is the maximal-lifespan solution to \eqref{nls}.  In particular, $I=\R$ and $u(t)$ is uniformly bounded in $H^1$. 
\end{lemma}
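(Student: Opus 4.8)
The plan is to run a standard continuity/bootstrap argument based on the sharp Gagliardo–Nirenberg inequality, using conservation of mass and energy to transfer the sub-threshold hypotheses to all later times. First I would rewrite the energy: by the Gagliardo–Nirenberg inequality,
\[
E(u(t)) \geq \tfrac12\|\nabla u(t)\|_{L^2}^2 - \tfrac{C_0}{p+2}\|u(t)\|_{L^2}^2\|\nabla u(t)\|_{L^2}^p.
\]
Raising to the power $p-2$ and multiplying by $M(u_0)^2 = M(u(t))^2$, I would introduce the scaling-invariant quantity
\[
y(t) := \|u(t)\|_{L^2}^2\|\nabla u(t)\|_{L^2}^{p-2},
\]
so that the right-hand side becomes a function of $y(t)$ alone. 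Using the Pohozaev identities \eqref{Qeq1}–\eqref{Qeq2} to express $M(Q)^2E(Q)^{p-2}$ and $\|Q\|_{L^2}^2\|\nabla Q\|_{L^2}^{p-2}$ in terms of $C_0$, the inequality $M(u_0)^2E(u_0)^{p-2} < (1-\delta)M(Q)^2E(Q)^{p-2}$ becomes, after the reduction, an inequality of the form $f(y(t)) \le (1-\delta) f(y_Q)$, where $y_Q = \|Q\|_{L^2}^2\|\nabla Q\|_{L^2}^{p-2}$ and $f(y) = \tfrac12 y^{2/(p-2)}\cdot(\text{const}) - (\text{const})\cdot y^{2/(p-2)\cdot p/2}$ — concretely, after factoring out the appropriate power of $\|u\|_{L^2}$, one gets a function that is increasing on $[0,y_Q]$, vanishes or is otherwise controlled past $y_Q$, and whose maximum over $[0,y_Q]$ is attained at $y_Q$ with value $f(y_Q)$ proportional to $M(Q)^2E(Q)^{p-2}$.

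Next I would invoke continuity in time of $y(t)$ (the solution lies in $C_tH^1$ on $I$) together with the initial condition $y(0) = \|u_0\|_{L^2}^2\|\nabla u_0\|_{L^2}^{p-2} \le y_Q$. The strict inequality $f(y(t)) \le (1-\delta) f(y_Q) < f(y_Q)$ prevents $y(t)$ from ever reaching $y_Q$: if $y(t)$ were to cross $y_Q$, then by the intermediate value theorem there would be a time with $y(t) = y_Q$, forcing $f(y(t)) = f(y_Q)$, a contradiction. Hence $y(t) < y_Q$ for all $t\in I$, and in fact a compactness/continuity argument on the closed region $\{y : f(y)\le(1-\delta)f(y_Q),\ y\le y_Q\}$ yields a uniform gap: there is $\delta' = \delta'(\delta,p) > 0$ with $y(t) \le (1-\delta')y_Q$ for all $t\in I$. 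This is exactly \eqref{coer}.

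Finally, the uniform bound $y(t)\le(1-\delta')y_Q$ combined with conservation of mass gives $\|\nabla u(t)\|_{L^2}^{p-2}\le(1-\delta')y_Q/M(u_0)$, hence a uniform $H^1$ bound; by the blowup criterion recalled after the local theory, this forces $I = \R$. The main obstacle — really the only delicate point — is the algebra of extracting the quantitative gap $\delta'$ from $\delta$: one must check that the relevant one-variable function is strictly increasing up to $y_Q$ (so that the sub-threshold level set stays a definite distance below $y_Q$) and that the power-law exponents coming from $p-2$ and $p$ behave correctly; this is where the Pohozaev identities \eqref{Qeq1}–\eqref{Qeq2} and the precise choice of exponents in \eqref{sub1}–\eqref{sub2} are essential, and it is worth verifying that the two hypotheses are consistent (i.e. that $y(0)\le y_Q$ is compatible with the energy hypothesis) so the bootstrap can even start.
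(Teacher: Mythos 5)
Your proposal is correct and follows essentially the same route as the paper: Gagliardo--Nirenberg plus conservation of mass and energy reduce the hypothesis to a one-variable inequality in the scaling-invariant quantity $y(t)=\|u(t)\|_{L^2}^2\|\nabla u(t)\|_{L^2}^{p-2}$ (the paper works with the equivalent normalized ratio $y=\|u\|_{L^2}^{2/(p-2)}\|\nabla u\|_{L^2}/\|Q\|_{L^2}^{2/(p-2)}\|\nabla Q\|_{L^2}$ and the function $\tfrac{p}{p-2}y^2-\tfrac{2}{p-2}y^p$), after which a continuity argument exploiting that the level set $\{f\le(1-\delta)f(y_Q)\}$ stays a definite distance below $y_Q$ gives the uniform gap $\delta'$, and the $H^1$ blowup criterion finishes. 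The monotonicity of $f$ on $[0,y_Q]$ that you flag as the delicate point does check out (its critical point is exactly at $y_Q=\tfrac{p+2}{p}C_0^{-1}$ by the Pohozaev identities), so there is no gap.
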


\begin{proof} By the sharp Gagliardo--Nirenberg inequality and conservation of mass/energy,
\begin{align*}
(1-\delta)M(Q)^2 E(Q)^{p-2}& \geq M(u)^2 E(u)^{p-2} \\
& \geq \bigl[\tfrac12 \|u\|_{L^2}^{\frac{4}{p-2}}\|\nabla u\|_{L^2}^2 - \tfrac{C_0}{p+2}\|u\|_{L^2}^{\frac{2p}{p-2}}\|\nabla u\|_{L^2}^p\bigr]^{p-2}
\end{align*}
for all $t\in I$.  Using \eqref{Qeq2}, a short computation reveals that this is equivalent to
\[
1-\delta \geq \frac{p}{p-2} \biggl(\frac{\|u(t)\|_{L^2}^{\frac{2}{p-2}}\|\nabla u(t)\|_{L^2}}{\|Q\|_{L^2}^{\frac{2}{p-2}}\|\nabla Q\|_{L^2}} \biggr)^2 - \frac{2}{p-2}\biggl(\frac{\|u(t)\|_{L^2}^{\frac{2}{p-2}}\|\nabla u(t)\|_{L^2}}{\|Q\|_{L^2}^{\frac{2}{p-2}}\|\nabla Q\|_{L^2}} \biggr)^p.\
\]
for all $t\in I$.  The desired bound now follows from a continuity argument and the fact that for $p>2$ we have 
\[
 \tfrac{p}{p-2}y^2 - \tfrac{2}{p-2}y^p \leq 1- \delta \implies |y-1|\geq \delta' \qtq{for some}\delta'>0.
\]
Global well-posedness and uniform $H^1$ bounds now follow from the conservation of $L^2$-norm and the $H^1$ blowup criterion.  \end{proof}

\begin{lemma}[Coercivity II]\label{L:Coercive2} If 
\[
\|f\|_{L^2}^2\|\nabla f\|_{L^2}^{p-2} \leq(1-\delta)\|Q\|_{L^2}^2 \|\nabla Q\|_{L^2}^{p-2}
\]
for some $\delta>0$, then there exists $\delta'>0$ such that 
\[
\|\nabla f\|_{L^2}^2 - \tfrac{p}{p+2}\|f\|_{L^{p+2}}^{p+2} \geq \delta'\|f\|_{L^{p+2}}^{p+2}. 
\]
\end{lemma}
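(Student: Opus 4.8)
The plan is to derive this from the sharp Gagliardo--Nirenberg inequality together with the Pohozaev identity \eqref{Qeq1}, via a direct computation. The first step is to rewrite the hypothesis in a form that isolates the constant $C_0$: using \eqref{Qeq1}, the assumption becomes
\[
\|f\|_{L^2}^2\|\nabla f\|_{L^2}^{p-2} \leq (1-\delta)\,\tfrac{p+2}{p}\,C_0^{-1}.
\]

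The second step is to bound the nonlinear term. Applying the sharp Gagliardo--Nirenberg inequality and splitting $\|\nabla f\|_{L^2}^p = \bigl(\|f\|_{L^2}^2\|\nabla f\|_{L^2}^{p-2}\bigr)\cdot\|f\|_{L^2}^{-2}\|\nabla f\|_{L^2}^2$, one gets
\[
\tfrac{p}{p+2}\|f\|_{L^{p+2}}^{p+2} \leq \tfrac{p}{p+2}\,C_0\,\bigl(\|f\|_{L^2}^2\|\nabla f\|_{L^2}^{p-2}\bigr)\|\nabla f\|_{L^2}^2 \leq (1-\delta)\|\nabla f\|_{L^2}^2,
\]
and therefore
\[
\|\nabla f\|_{L^2}^2 - \tfrac{p}{p+2}\|f\|_{L^{p+2}}^{p+2} \geq \delta\,\|\nabla f\|_{L^2}^2.
\]
This already yields coercivity, but with $\|\nabla f\|_{L^2}^2$ rather than $\|f\|_{L^{p+2}}^{p+2}$ on the right-hand side; the last step converts between the two. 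Since the displayed difference is nonnegative, we have $\tfrac{p}{p+2}\|f\|_{L^{p+2}}^{p+2}\le\|\nabla f\|_{L^2}^2$, i.e. $\|\nabla f\|_{L^2}^2 \geq \tfrac{p}{p+2}\|f\|_{L^{p+2}}^{p+2}$. Feeding this back into the lower bound $\delta\|\nabla f\|_{L^2}^2$ gives
\[
\|\nabla f\|_{L^2}^2 - \tfrac{p}{p+2}\|f\|_{L^{p+2}}^{p+2} \geq \tfrac{p\delta}{p+2}\|f\|_{L^{p+2}}^{p+2},
\]
so the claim holds with $\delta' = \tfrac{p\delta}{p+2}$.

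I do not anticipate a genuine obstacle here: the argument is a one-line manipulation of the sharp Gagliardo--Nirenberg inequality once the hypothesis is translated into a bound on $C_0\|f\|_{L^2}^2\|\nabla f\|_{L^2}^{p-2}$ via \eqref{Qeq1}. The only points requiring a little care are remembering to use the ground-state identity \eqref{Qeq1} at the outset, and exploiting the nonnegativity of the virial-type quantity $\|\nabla f\|_{L^2}^2 - \tfrac{p}{p+2}\|f\|_{L^{p+2}}^{p+2}$ to trade the gradient term for the potential term on the right-hand side in the final step.
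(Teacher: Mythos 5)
Your proof is correct and uses the same essential mechanism as the paper: the sharp Gagliardo--Nirenberg inequality combined with the Pohozaev identity \eqref{Qeq1} to show $\tfrac{p}{p+2}\|f\|_{L^{p+2}}^{p+2}\leq(1-\delta)\|\nabla f\|_{L^2}^2$, followed by the rearrangement trading $\|\nabla f\|_{L^2}^2$ for $\|f\|_{L^{p+2}}^{p+2}$. The paper merely packages the same computation through the energy functional $E(f)$ and the constant $\tfrac{p-2}{2p}$; your direct version, yielding $\delta'=\tfrac{p\delta}{p+2}$, is if anything slightly cleaner.
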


\begin{proof} By the sharp Gagliardo--Nirenberg inequality and \eqref{Qeq1},
\begin{align*}
E(f) & \geq \|\nabla f\|_{L^2}^2[\tfrac12-\tfrac1{p+2}C_0\|f\|_{L^2}^2\|\nabla f\|_{L^2}^{p-2}] \\
& \geq \|\nabla f\|_{L^2}^2[\tfrac12-\tfrac{1}{p+2}C_0(1-\delta)\|Q\|_{L^2}^2\|\nabla Q\|_{L^2}^{p-2}] \\
& = \|\nabla f\|_{L^2}^2[\tfrac{p-2}{2p}+\tfrac{\delta}{p}]. 
\end{align*}
Thus
\[
\tfrac{1}{p}\|\nabla f\|_{L^2}^2 -\tfrac{1}{p+2}\|f\|_{L^{p+2}}^{p+2} = E(f) - \tfrac{p-2}{2p}\|\nabla f\|_{L^2}^2 \geq \tfrac{\delta}{p}\|\nabla f\|_{L^2}^2, 
\]
which implies the result after some rearranging. 
\end{proof}
\section{Virial/Morawetz estimate}\label{S:Morawetz}

Let $u_0$ satisfy \eqref{sub1} and \eqref{sub2}, and let $u$ be the corresponding global-in-time solution to \eqref{nls} guaranteed by Lemma~\ref{L:Coercive1}.  In particular, $u$ is uniformly bounded in $H^1$ and obeys \eqref{coer}. Applying the scaling \eqref{scaling}, we may assume 
\begin{equation}\label{E0}
M(u)=E(u)=E_0.
\end{equation}

We will prove the following space-time estimate. 

\begin{proposition}\label{P:Morawetz}  For any $T>0$,
\[
\int_0^T \int |u(t,x)|^{p+2}\,dx\,dt \lesssim_{E_0} T^{\alpha},\qtq{where}\alpha=\max\{\tfrac13,\tfrac{2}{p+2}\}. 
\]
\end{proposition}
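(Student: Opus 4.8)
The plan is to derive Proposition~\ref{P:Morawetz} from a spatially truncated virial/Morawetz identity, in the spirit of \cite{DM1, DM2}. Fix a radius $R\geq 1$, to be optimized against $T$ only at the very end, and let $a=a_R\colon\R^2\to\R$ be a smooth radial weight with $a(x)=|x|^2$ for $|x|\leq R$, satisfying $0\leq a''\leq 2$ and $0\leq a'/|x|\leq 2$ (so $0\leq\Delta a\leq 4$), $\|\nabla a\|_{L^\infty}\lesssim R$, and $\Delta^2 a$ supported in $\{|x|\geq R\}$ with $\|\Delta^2 a\|_{L^\infty}\lesssim R^{-2}$; such a weight is standard. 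Set
\[
M_R(t)=2\int_{\R^2}\nabla a\cdot\Im\bigl(\bar u\,\nabla u\bigr)\,dx.
\]
Since $u$ remains spherically symmetric and is uniformly bounded in $H^1$ by Lemma~\ref{L:Coercive1}, Cauchy--Schwarz gives $|M_R(t)|\lesssim\|\nabla a\|_{L^\infty}\|u\|_{L^2}\|\nabla u\|_{L^2}\lesssim_{E_0}R$, uniformly in $t$. A standard computation --- valid for $H^1$ solutions after a routine approximation argument --- gives the virial identity, which for radial $a$ and radial $u$ reduces to
\[
\frac{d}{dt}M_R(t)=\int_{\R^2}\Bigl(4a''\,|\nabla u|^2-\tfrac{2p}{p+2}\,\Delta a\,|u|^{p+2}-\Delta^2 a\,|u|^2\Bigr)\,dx.
\]

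Next I would bound $\tfrac{d}{dt}M_R$ from below. On $\{|x|\leq R\}$ the weight is exactly $|x|^2$, so the first two terms there equal $8\bigl(|\nabla u|^2-\tfrac{p}{p+2}|u|^{p+2}\bigr)$; on $\{|x|>R\}$ the gradient term $4a''|\nabla u|^2$ is nonnegative and may be discarded. The bi-Laplacian term contributes at most $C_{E_0}R^{-2}$ because $\|\Delta^2 a\|_{L^\infty}\lesssim R^{-2}$ and $\|u\|_{L^2}^2\lesssim_{E_0}1$. For the nonlinear term on $\{|x|>R\}$, the radial Sobolev embedding (Lemma~\ref{L:Sobolev}) gives $|u(t,x)|\lesssim_{E_0}|x|^{-1/2}$, so that $\int_{|x|>R}|u|^{p+2}\,dx\lesssim_{E_0}R^{-p/2}\|u\|_{L^2}^2\lesssim_{E_0}R^{-p/2}$. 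Altogether,
\[
\frac{d}{dt}M_R(t)\geq 8\int_{|x|\leq R}\bigl(|\nabla u|^2-\tfrac{p}{p+2}|u|^{p+2}\bigr)\,dx-C_{E_0}\bigl(R^{-2}+R^{-p/2}\bigr).
\]

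The heart of the matter is to show the truncated functional on the right is coercive up to errors of the same size. Let $\phi=\phi_R$ be a smooth radial cutoff with $\phi\equiv 1$ on $\{|x|\leq R/2\}$, $\supp\phi\subset\{|x|\leq R\}$, $|\nabla\phi|\lesssim R^{-1}$, $|\Delta\phi|\lesssim R^{-2}$, and put $v=\phi u$. Integrating by parts in the cross term yields the exact identity $\|\nabla v\|_{L^2}^2=\int\phi^2|\nabla u|^2-\int\phi\,\Delta\phi\,|u|^2$, whence
\[
\|\nabla v\|_{L^2}^2\leq\int_{|x|\leq R}|\nabla u|^2\,dx+C_{E_0}R^{-2};
\]
it is essential here that the cross term is removed exactly, so that no constant larger than $1$ sits in front of $\int_{|x|\leq R}|\nabla u|^2$ (a crude bound would produce a factor $2$ and break the coercivity). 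Likewise, using $\phi\equiv 1$ on $\{|x|\leq R/2\}$ and radial Sobolev on $\{|x|>R/2\}$, one gets $\int_{|x|\leq R}|u|^{p+2}\,dx\leq\|v\|_{L^{p+2}}^{p+2}+C_{E_0}R^{-p/2}$. Since $\|v\|_{L^2}\leq\|u\|_{L^2}$ and $\|\nabla v\|_{L^2}\leq\|\nabla u\|_{L^2}+C_{E_0}R^{-1}$, the product $\|v\|_{L^2}^2\|\nabla v\|_{L^2}^{p-2}$ stays within $C_{E_0}R^{-1}$ of $\|u\|_{L^2}^2\|\nabla u\|_{L^2}^{p-2}$, which by \eqref{coer} is bounded strictly below $\|Q\|_{L^2}^2\|\nabla Q\|_{L^2}^{p-2}$; hence for $R$ large (depending on $E_0$) Lemma~\ref{L:Coercive2} applies to $v$ and gives $\|\nabla v\|_{L^2}^2-\tfrac{p}{p+2}\|v\|_{L^{p+2}}^{p+2}\geq\delta'\|v\|_{L^{p+2}}^{p+2}$ for some $\delta'>0$. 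Combining these displays with $\|v\|_{L^{p+2}}^{p+2}\geq\|u(t)\|_{L^{p+2}}^{p+2}-C_{E_0}R^{-p/2}$ (again by radial Sobolev on the tail) yields
\[
\frac{d}{dt}M_R(t)\geq 8\delta'\,\|u(t)\|_{L^{p+2}}^{p+2}-C_{E_0}\bigl(R^{-2}+R^{-p/2}\bigr).
\]

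Finally I would integrate over $[0,T]$ and invoke $|M_R(t)|\lesssim_{E_0}R$ to obtain
\[
\int_0^T\|u(t)\|_{L^{p+2}}^{p+2}\,dt\lesssim_{E_0}R+TR^{-2}+TR^{-p/2}.
\]
Optimizing, the choice $R=\max\{1,\,T^{1/3},\,T^{2/(p+2)}\}$ makes the right-hand side $\lesssim_{E_0}T^\alpha$ with $\alpha=\max\{\tfrac13,\tfrac{2}{p+2}\}$; for small $T$ (where this $R$ may be too small for the coercivity step) the bound is immediate from conservation of mass/energy and the uniform $H^1$ bound. The main obstacle is the competition between the two truncations: the weight $a_R$ must be cut off in order that $M_R$ be finite and obey $|M_R|\lesssim R$, which is precisely what generates the $O(R)$ and $O_{E_0}(TR^{-2})$ terms, while the coercivity inequality must in turn be localized by a cutoff $\phi$ --- and the delicate point is to carry out this localization without paying a gradient error that is merely $O_{E_0}(1)$ in time (hence $O_{E_0}(T)$ after integration, which would wreck the subcritical power $T^\alpha$); the exact commutator identity above, together with the radial Sobolev control of $|u|^{p+2}$ at large radii, is what makes this possible.
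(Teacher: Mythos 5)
Your proposal is correct and follows essentially the same route as the paper: the weight you describe (with $a=|x|^2$ for $|x|\le R$ and $a''=2\phi(r/R)$-type truncation) is exactly the paper's $\psi(x/R)\,x=\tfrac12\nabla a$, and the key steps — the $O(R)$ bound on the Morawetz quantity, the exact integration-by-parts identity so that the localized gradient term carries no constant loss, Lemma~\ref{L:Coercive2} applied to the cutoff $\phi u$, radial Sobolev for the $|x|\gtrsim R$ tails, and the optimization $R\sim T^{1/(1+\sigma)}$ with $\sigma=\min\{2,\tfrac p2\}$ — all coincide with the paper's argument. The only differences are cosmetic (normalization of the multiplier and the radii $R/2$ vs.\ $R$ vs.\ $2R$ in the cutoffs).
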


\begin{proof} We let $\phi$ be a smooth radial function satisfying
\[
\phi(x) = \begin{cases} 1 & 0\leq |x|\leq 1, \\ 0 & |x|>2.\end{cases}
\]
We may write $\phi=\phi(r)$ where $r=|x|$. We use $'$ or $\partial_r$ to denote radial derivatives.

We then set
\[
\psi(x) = \tfrac{1}{|x|}\int_0^{|x|}\phi(\rho)\,d\rho.
\]
In particular, $\psi(r)=\phi(r)$ for $r\leq 1$.  We also have the following bound:
\begin{equation}\label{psi-bds}
\begin{aligned}
|\psi(x)| &\lesssim \min\{1,\tfrac{1}{|x|}\}. 
\end{aligned}
\end{equation}

Note that
\begin{equation}\label{psi-eqs}
\begin{aligned}
r\psi'(r) &= \phi(r)-\psi(r).
\end{aligned}
\end{equation}
In particular, $\psi'(r)=\phi'(r)=0$ for $r\leq 1$, while we have
\begin{equation}\label{psi'-bds}
|\psi'(x)|\lesssim\tfrac{1}{|x|^2} \qtq{for}|x|>1.
\end{equation}

Given $R\geq 1$, we define the Morawetz quantity
\[
A(t) = \int \psi(\tfrac{x}{R})x\cdot\Im[\bar u \nabla u]\,dx
\]
which satisfies
\begin{equation}\label{Morub}
|A(t)| \lesssim R E_0\qtq{uniformly over}t\in\R. 
\end{equation}
Using \eqref{nls}, we compute
\begin{align}
\tfrac{dA}{dt} & = \Re\int \psi(\tfrac{x}{R})x_k\bigl[\bar u u_{jjk} - \bar u_{jj} u_k\bigr]\,dx \label{Mor1} \\
& \quad + \Re\int\psi(\tfrac{x}{R})x_k[\bar u\partial_k(|u|^p u)-|u|^p \bar u u_k\bigr]\,dx,\label{Mor2}
\end{align}
where subscripts denote partial derivatives and repeated indices are summed. 

For \eqref{Mor1}, we begin by observing that
\[
\Re[\bar u u_{jjk} - \bar u_{jj} u_k] = \tfrac12\partial_{jjk}|u|^2 - 2\Re\partial_j[\bar u_j u_k]. 
\]
We will insert this identity into \eqref{Mor1} and integrate by parts.  Using \eqref{psi-eqs} as well, this yields
\[
\eqref{Mor1} = -\tfrac12 \int \Delta[\psi(\tfrac{x}{R})+\phi(\tfrac{x}{R})] |u|^2\,dx + 2\int \psi(\tfrac{x}{R})|\nabla u|^2 + \psi'(\tfrac{x}{R})\tfrac{|x|}{R}|\partial_r u|^2\,dx. 
\]
Now observe
\[
\Delta[\psi(\tfrac{x}{R})+\phi(\tfrac{x}{R})] = \tfrac{1}{R^2}\phi''(\tfrac{x}{R}) + \tfrac{1}{R|x|}\bigl[2\phi'(\tfrac{x}{R})-\psi'(\tfrac{x}{R})\bigr].
\]
Recalling \eqref{psi-eqs} and \eqref{psi'-bds}, we deduce
\begin{align}
\eqref{Mor1} &= 2\int \phi(\tfrac{x}{R})|\nabla u|^2 - \tfrac{1}{2R^2}\int \phi''(\tfrac{x}{R})|u|^2 -\int\tfrac{1}{2R|x|} [2\phi'(\tfrac{x}{R})-\psi'(\tfrac{x}{R})]|u|^2\,dx \nonumber \\
& = 2\int \phi(\tfrac{x}{R})|\nabla u|^2 + \mathcal{O}(\tfrac{1}{R^2}\|u\|_{L^2}^2). \label{Mor11}
\end{align}

We turn to \eqref{Mor2}. As
\[
\Re\{\bar u \partial_k(|u|^p u) - |u|^p \bar  u u_k\} = \tfrac{p}{p+2}\partial_k(|u|^{p+2}),
\]
we may use \eqref{psi-eqs} to write
\begin{equation}\label{Mor21}
\eqref{Mor2} = -\int [\psi(\tfrac{x}{R})+\phi(\tfrac{x}{R})]\tfrac{p}{p+2}|u|^{p+2}\,dx. 
\end{equation}

We now collect \eqref{Mor11} and \eqref{Mor21} to obtain 
\begin{align}\label{Mor-wait}
\tfrac{dA}{dt} \geq  \int 2\phi(\tfrac{x}{R})|\nabla u|^2 - \tfrac{p}{p+2}[\psi(\tfrac{x}{R})+\phi(\tfrac{x}{R})]|u|^{p+2}\,dx - \mathcal{O}(\tfrac{1}{R^2})
\end{align}
Now, by construction (see e.g. \eqref{psi-bds}) and radial Sobolev embedding (Lemma~\ref{L:Sobolev}), we can estimate
\begin{equation}\label{radial-sobolev-trick}
\begin{aligned}
\int |\psi(\tfrac{x}{R})-\phi(\tfrac{x}{R})|\,|u|^{p+2}\,dx & \lesssim \int_{|x|>R}\tfrac{R}{|x|}|u|^{p+2}\,dx  \\
& \lesssim R^{-\frac{p}{2}}\||x|^{\frac12} u\|_{L^\infty}^p\|u\|_{L^2}^2 \lesssim R^{-\frac{p}{2}},
\end{aligned}
\end{equation}
and so we may continue from \eqref{Mor-wait} to get
\begin{equation}\label{Mor-wait2}
\tfrac{dA}{dt} \geq 2\int \phi(\tfrac{x}{R})\bigl[|\nabla u|^2 - \tfrac{p}{p+2}|u|^{p+2}\bigr]\,dx - \mathcal{O}(\tfrac{1}{R^\sigma}), 
\end{equation}
where
\[
\sigma=\min\{2,\tfrac{p}{2}\}. 
\]

Next, let us establish a lower bound.
\begin{lemma}\label{lower-bound} There exists $\eta>0$ such that for $R$ sufficiently large, we have 
\[
2\int\phi(\tfrac{x}{R})[|\nabla u|^2 - \tfrac{p}{p+2}|u|^{p+2} ]\,dx \geq\eta \int_{|x|\leq \frac{R}{2}} |u|^{p+2}\,dx  - \mathcal{O}(\tfrac{1}{R^\sigma})
\]
uniformly in time.
\end{lemma}

\begin{proof}[Proof of Lemma~\ref{lower-bound}] Let us write $\phi(\tfrac{x}{R})=\chi_R^2(x)$.  We first wish to show
\begin{equation}\label{supt}
\sup_t \| \chi_R u(t)\|_{L^2}^2 \|\chi_R u(t)\|_{\dot H^1}^{p-2} <(1-\tfrac12\delta')\|Q\|_{L^2}^2\|Q\|_{\dot H^1}^{p-2}
\end{equation}
for $R$ sufficiently large, where $\delta'$ is as in \eqref{coer}. Given that \eqref{coer} holds and the cutoff only decreases the $L^2$ norm, we need only consider the $\dot H^1$ norm.  For this we compute
\[
\int \chi_R^2 |\nabla u|^2 \,dx = \int |\nabla(\chi_R u)|^2 + \chi_R \Delta [\chi_R] |u|^2\,dx,
\]
which shows
\[
\|\chi_R u\|_{\dot H^1}^2 \leq \|u\|_{\dot H^1}^2 + \mathcal{O}(R^{-2}\|u\|_{L^2}^2),
\]
and hence \eqref{supt} holds for $R$ large enough.

Using Lemma~\ref{L:Coercive2}, \eqref{supt} implies
\[
\|\chi_R u(t)\|_{\dot H^1}^2 - \tfrac{p}{p+2}\|\chi_R u(t)\|_{L^{p+2}}^{p+2}\geq 10\eta\|\chi_R u(t)\|_{L^{p+2}}^{p+2}
\]
for some $\eta>0$, uniformly in $t$.  Now, the argument just given above shows that we may replace 
\[
\|\chi_R u(t)\|_{\dot H^1}^2 \qtq{with} \int \phi(\tfrac{x}{R})|\nabla u|^2\,dx
\]
with errors that are $\mathcal{O}(R^{-2})$ uniformly in time.  Similarly, estimating as in \eqref{radial-sobolev-trick}, we may write
\begin{align*}
\int |\chi_R u|^{p+2} & = \int \phi(\tfrac{x}{R})|u|^{p+2} + \mathcal{O}\biggl[\int_{|x|>\frac{R}{2}} |u|^{p+2}\,dx\biggr] \\
& = \int \phi(\tfrac{x}{R})|u|^{p+2}\,dx + \mathcal{O}(R^{-\frac{p}{2}})
\end{align*}
uniformly in $t$.   This completes the proof. 
\end{proof} 

With Lemma~\ref{lower-bound} in place, we may combine \eqref{Mor-wait2}, the fundamental theorem of calculus, and \eqref{Morub} to deduce
\[
\int_0^T \int_{|x|\leq \frac{R}{2}} |u(t,x)|^{p+2}\,dx\,dt \lesssim_{E_0} R+\tfrac{T}{R^\sigma}
\]
uniformly in $T,R\geq 1$.  As radial Sobolev embedding (Lemma~\ref{L:Sobolev}) yields
\[
\int_{|x|>\frac{R}{2}}|u(t,x)|^{p+2}\,dx \lesssim R^{-\frac{p}{2}} \| |x|^{\frac12} u\|_{L^\infty}^p \|u\|_{L^2}^2 \lesssim R^{-\frac{p}{2}}\|u\|_{H^1}^{p+1} \lesssim R^{-\frac{p}{2}}
\]
uniformly in time, we deduce
\[
\int_0^T \int |u(t,x)|^{p+2}\,dx\,dt \lesssim_{E_0} R+TR^{-\sigma}
\]
uniformly in $T,R\geq 1$.  Choosing $R=T^{\frac{1}{1+\sigma}}$ yields
\[
\int_0^T \int |u(t,x)|^{p+2}\,dx \,dt \lesssim T^{\frac{1}{1+\sigma}}.
\]
Observing that 
\[
\tfrac{1}{1+\sigma}=\tfrac{1}{1+\min\{2,\frac{p}{2}\}} = \max\{\tfrac13,\tfrac{2}{p+2}\}=\alpha,
\] 
we complete the proof of the proposition. \end{proof}

\section{Scattering} 

In this section we will use the Morawetz/virial estimate of Proposition~\ref{P:Morawetz} to establish scattering.

\begin{proof}[Proof of Theorem~\ref{T}] For initial data $u_0$ as in Theorem~\ref{T}, we are guaranteed a global-in-time solution $u(t)$ satisfying uniform $H^1$ bounds by Lemma~\ref{L:Coercive1}.  We rescale $u$ so that \eqref{E0} holds, and we have the Morawetz/virial estimate, Proposition~\ref{P:Morawetz}.

Our argument is similar to the one appearing in \cite{DM2}.  Let $\eps>0$ be a small parameter to be chosen sufficiently small (depending on $E_0$) below.  As Sobolev embedding and Strichartz yield
\[
\|e^{it\Delta}u_0\|_{L_{t,x}^{2p}(\R\times\R^2)} \lesssim \|u_0\|_{H^1(\R^2)} \lesssim_{E_0} 1,
\]
we may split $\R$ into $J=J(\eps,E_0)$ intervals $I_j$ such that
\begin{equation}\label{dispersed}
\|e^{it\Delta}u_0\|_{L_{t,x}^{2p}(I_j\times\R^2)} <\eps
\end{equation}
for each $j$.  We let $T=T(\eps)$ be a large parameter to be determined below.  We will prove
\begin{equation}\label{WWP}
\|u\|_{L_{t,x}^{2p}(I_j\times\R^2)}^{2p} \lesssim_{E_0} T
\end{equation}
for each $j$.  Then, summing over $j$ yields the critical global space-time bound
\[
\|u\|_{L_{t,x}^{2p}(\R\times\R^2)}^{2p} \lesssim_{E_0} T,
\]
which in turn yields scattering by standard arguments. 

As H\"older's inequality and Sobolev embedding imply
\begin{equation}\label{HolderT}
\|u\|_{L_{t,x}^{2p}(I\times\R^2)}^{2p} \lesssim_{E_0} \langle I\rangle 
\end{equation}
for any interval $I\subset\R$, it suffices to consider $j$ such that $|I_j|>2T$. 

Let us fix one such interval, say $I=(a,b)$ with $|I|>2T$.  We will show that there exists $t_1\in(a,a+T)$ such that 
\begin{equation}\label{scatter1}
\biggl\| \int_0^{t_1}e^{i(t-s)\Delta}(|u|^p u)(s)\,ds\biggr\|_{L_{t,x}^{2p}([t_1,\infty)\times\R^2)} \lesssim_{E_0} C(\eps) T^{-\beta}+ \eps^{\frac12}
\end{equation}
for some $\beta>0$.

Assuming \eqref{scatter1} for the moment, let us complete the proof.  We use the Duhamel formula to write
\[
e^{i(t-t_1)\Delta}u(t_1) = e^{it\Delta}u_0 + i\int_0^{t_1} e^{i(t-s)\Delta}(|u|^p u)(s)\,ds.
\]
Thus, choosing $T$ sufficiently large depending on $\eps$ and recalling \eqref{dispersed}, we deduce
\[
\|e^{i(t-t_1)\Delta}u(t_1)\|_{L_{t,x}^{2p}([t_1,b]\times\R^2)} \lesssim \eps^{\frac12}.
\]
For $\eps$ small enough, this yields by a continuity argument the bound
\[
\|u\|_{L_{t,x}^{2p}([t_1,b]\times\R^2)} \lesssim \eps^{\frac12}.
\]
On the other hand, using $|t_1-a|<T$ and \eqref{HolderT}, we get
\[
\|u\|_{L_{t,x}^{2p}([a,t_1]\times\R^2)}^{2p}\lesssim T,
\]
and hence \eqref{WWP} holds, as desired. 

It remains to prove \eqref{scatter1}. By time-translation invariance, we may assume $a=0$.  We begin by applying Proposition~\ref{P:Morawetz}, which yields 
\begin{equation}\label{scattering1}
\int_{0}^T\int |u(t,x)|^{p+2}\,dx\,dt \lesssim_{E_0} T^{\alpha},\qtq{where}\alpha=\max\{\tfrac13,\tfrac{2}{p+2}\}.
\end{equation}
We claim that there exists $t_0\in[\tfrac{T}{4},\tfrac{T}{2}]$ and $\delta=\delta(\eps,E_0)>0$ such that
\begin{equation}\label{scattering2}
\int_{t_0}^{t_0+\delta T^{1-\alpha}} \int |u(t,x)|^{p+2}\,dx\,dt < \eps. 
\end{equation}
Indeed, as $[\tfrac{T}{4},\tfrac{T}{2}]$ is covered by $\sim\delta^{-1} T^\alpha$ intervals of length $\delta T^{1-\alpha}$, the bound \eqref{scattering1} shows that there must be some interval $[t_0,t_0+\delta T^{1-\alpha}]$ obeying
\[
\int_{t_0}^{t_0+\delta T^{1-\alpha}} \int |u(t,x)|^{p+2}\,dx\,dt\lesssim \delta C(E_0),
\]
which yields the claim.

We now set 
\[
t_1=t_0+\delta T^{1-\alpha}
\]
and observe that since $t_0\leq \tfrac{T}{2}$, we may guarantee that $t_1<T$.

We will estimate the integral in \eqref{scatter1} by estimating separately the contribution of $[0,t_0]$ and $[t_0,t_1]$. 

We first treat $[0,t_0]$. For $t>t_1$,  we may use the dispersive estimate, H\"older's inequality, and \eqref{scattering1} to estimate
\begin{align*}
\biggl\| \int_0^{t_0} e^{i(t-s)\Delta}|u|^p u\,ds\biggr\|_{L_x^\infty} & \lesssim \int_0^{t_0} |t-s|^{-1} \|u(s)\|_{L^{p+2}}^{\frac{(p+2)(p-1)}{p}}\|u(s)\|_{L^2}^{\frac{2}{p}}\,ds \\
& \lesssim\biggl[\int_0^{t_0}\int |u(s,x)|^{p+2}\,dx\,dt\biggr]^{\frac{p-1}{p}} \|\,|t-s|^{-1}\|_{L_s^p} \\
& \lesssim_{E_0} T^{\frac{p-1}{p}\alpha}|t-t_0|^{-\frac{p-1}{p}} \lesssim \delta^{-\frac{p-1}{p}}T^{-(1-2\alpha)\frac{p-1}{p}} 
\end{align*}
yielding
\[
\biggl\| \int_0^{t_0} e^{i(t-s)\Delta}|u|^p u\,ds\biggr\|_{L_{t,x}^\infty([t_1,\infty)\times\R^2)} \lesssim \delta^{-\frac{p-1}{p}}T^{-(1-2\alpha)\frac{p-1}{p}}.
\]
On the other hand, we may write
\[
i\int_0^{t_0}e^{i(t-s)\Delta}|u|^p u\,ds = e^{i(t-t_0)\Delta} u(t_0)-e^{it\Delta}u_0,
\]
so that by Strichartz we have
\[
\biggl\| \int_0^{t_0} e^{i(t-s)\Delta}|u|^p u(s)\,ds \biggr\|_{L_{t,x}^4(\R\times\R^2)}\lesssim_{E_0} 1. 
\]
Thus, by interpolation and the fact that $\tfrac13<\alpha<\tfrac12$, we get
\begin{equation}\label{NTS-1}
\begin{aligned}
\biggl\|\int_0^{t_0} &e^{i(t-s)\Delta}|u|^p u(s)\,ds \biggr\|_{L_{t,x}^{2p}([t_1,\infty)\times\R^2)}\\ 
& \lesssim_{E_0} [ \delta^{-\frac{p-1}{p}}T^{-(1-2\alpha)\frac{p-1}{p}}]^{1-\frac{2}{p}} \lesssim_{E_0} C(\eps) T^{-\beta}
\end{aligned}
\end{equation}
for some $\beta>0$.  This is an acceptable contribution to \eqref{scatter1}. 

We next consider the contribution of $[t_0,t_1]$. Let us first show how the estimate works, employing the $a\pm$ notation; we will show how to choose exponents more precisely in Remark~\ref{remark-exponents} below. By Sobolev embedding, Strichartz, the fractional chain rule, and \eqref{scattering2},  
\begin{align*}
\biggl\| \int_{t_0}^{t_1} e^{i(t-s)\Delta}|u|^p u(s)\,ds\biggr\|_{L_{t,x}^{2p}}& \lesssim \biggl\| |\nabla|^{s_c}\int_{t_0}^{t_1}e^{i(t-s)\Delta}|u|^p u(s)\,ds\biggr\|_{L_t^{2p}L_x^\frac{2p}{p-1}} \\
& \lesssim \| |\nabla|^{s_c}(|u|^p u)\|_{L_t^{2-}L_x^{1+}([t_0,t_1]\times\R^2)} \\
& \lesssim \|u\|_{L_{t,x}^{p+2}([t_0,t_1]\times\R^2)}^{\frac{p+2}{2}+}\|u\|_{L_t^\infty L_x^{\infty-}}^{\frac{p-2}{2}-}\||\nabla|^{s_c}u\|_{L_t^\infty L_x^{2+}} \\
& \lesssim_{E_0} \eps^{\frac12},
\end{align*}
which is again an acceptable contribution to \eqref{scatter1}.  

This completes the proof of \eqref{scatter1} and hence of Theorem~\ref{T}.\end{proof}

\begin{remark}\label{remark-exponents} It is also possible (although, we contend, less transparent) to choose the exponents in the final estimate above more precisely:  Let $\theta\in(0,1)$ be a small parameter satisfying $p(1-\theta)>2$.  Then we may estimate   
\begin{align*}
 \| |\nabla|^{s_c}(|u|^p u)\|_{L_t^{2-\theta}L_x^{\frac{4-2\theta}{4-3\theta}}([t_0,t_1]\times\R^2)} 
& \lesssim \|u\|_{L_{t,x}^{p+2}([t_0,t_1]\times\R^2)}^{\frac{p+2}{2-\theta}}\|u\|_{L_t^\infty L_x^{r_2}}^{\frac{p(1-\theta)-2}{2-\theta}}\||\nabla|^{s_c}u\|_{L_t^\infty L_x^{r_1}},
\end{align*}
where, given a choice of $r_1$ we must have (by scaling)
\[
r_2 = \tfrac{2r_1[p(1-\theta)-2]}{r_1(2-3\theta)-(4-2\theta)}.
\]
In particular, to guarantee finiteness of $r_2$ we should take 
\begin{equation}\label{r1-constraint1}
r_1>\tfrac{4-2\theta}{2-3\theta}>2,
\end{equation}
which is compatible with $r_1\leq\tfrac{2p}{p-2}$ (needed for the embedding $\dot H^{s_c,r_1}\subset H^1$) provided $\theta$ also obeys $\theta\leq\tfrac{2}{p+1}$. It then remains to verify that we may guarantee $r_2\geq 2$. After some rearranging, this reduces to the constraint
\[
r_1\leq \begin{cases} \tfrac{4-2\theta}{2-3\theta -[p(1-\theta)-2]} & \text{if } p(1-\theta)-2<2-3\theta \\ \infty & \text{otherwise}.\end{cases}
\]
As this is compatible with \eqref{r1-constraint1}, we conclude that there exist suitable choices of exponents, as claimed. 
\end{remark}

\subsection*{Acknowledgements} B.~D.~  was supported by NSF DMS-1764358 and completed part of this work while a von Neumann fellow at the Institute for Advanced Study.



\begin{thebibliography}{100}

\bibitem{AkahoriNawa} T. Akahori and H. Nawa, \emph{Blowup and scattering problems for the nonlinear Schr\"odinger equations.} Kyoto J. Math. \textbf{53} (2013), no. 3, 629--672. 

\bibitem{Cazenave} T. Cazenave, \emph{Semilinear Schr\"odinger equations.} Courant Lecture Notes in Mathematics, \textbf{10}. \emph{New York University, Courant Institute of Mathematical Sciences, New York; American Mathematical Society, Providence, RI,} 2003. xiv+323pp. 

\bibitem{CFX} T. Cazenave, D. Fang, and J. Xie, \emph{Scattering for the focusing energy-subcritical nonlinear Schr\"odinger equation,} Sci. China Math. \textbf{54} (2011), no. 10, 2037--2062.

\bibitem{DM1} B. Dodson and J. Murphy, \emph{A new proof of scattering below the ground state for the 3d radial focusing cubic NLS,} Proc. Amer. Math. Soc. \textbf{145} (2017), no. 11, 4859--4867.

\bibitem{DM2} B. Dodson and J. Murphy, \emph{A new proof of scattering below the ground state for the non-radial focusing NLS.} Math. Res. Lett. \textbf{25} (2018), no. 6, 1805--1825. 

\bibitem{DHR} T. Duyckaerts, J. Holmer, and S. Roudenko, \emph{Scattering for the nonradial 3D cubic nonlinear Schr\"odinger equation}, Math. Res. Lett. \textbf{15} (2008), no. 6, 1233--1250.

\bibitem{GinVel} J. Ginibre and G. Velo, \emph{Smoothing properties and retarded estimates for some dispersive evolution equations}. Comm. Math. Phys. \textbf{144} (1992), no. 1, 163--188. 

\bibitem{Gue}  C. Guevara, \emph{Global behavior of finite energy solutions to the d-dimensional focusing nonlinear Schr\"odinger equation}, Appl. Math. Res. Express. AMRX 2014, no. 2, 177--243.

\bibitem{HR} J. Holmer and S. Roudenko, \emph{A sharp condition for scattering of the radial 3D cubic nonlinear Schr\"odinger equation}, Comm. Math. Phys. \textbf{282} (2008), no. 2, 435--467.

\bibitem{KeeTao} M. Keel and T. Tao, \emph{Endpoint Strichartz estimates}. Amer. J. Math. \textbf{120} (1998), no. 5, 955--980. 

\bibitem{Str} R. S. Strichartz, \emph{Restrictions of Fourier transforms to quadratic surfaces and decay of solutions of wave equations.} Duke. Math. J. \textbf{44} (1977), no. 3, 705--714. 

\bibitem{Tao2d} T. Tao, \emph{Spherically averaged endpoint Strichartz estimates for the two-dimensional Schr\"odinger equation.} Comm. Partial Differential Equations \textbf{25} (2000), no. 7-8, 1471--1485. 

\bibitem{Weinstein} M. I. Weinstein, \emph{Nonlinear Schr\"odinger equations and sharp interpolation estimates.} Comm. Math. Phys. \textbf{87} (1982/83), no. 4, 567--576.
\end{thebibliography}
\end{document}